\def\phi{\varphi}
\def\dot{,\ldots,}
\def\G{\mathcal{G}}\def\P{\mathcal{P}}
\def\Voc{\mbox{Voc}}
\def\So{\mbox{SO}}
\def\Strt{\mbox{Sut}}
\def\Str{\mbox{Str}}
\def\Sat{\mbox{Sat}}
\def\soc{second order characterizable}
\def\Exists{\tilde{\exists}}
\def\Forall{\tilde{\forall}}
\def\dom{\mathop{\rm dom}}
\def\Str{{\rm Str}}
\def\ma{{\mathcal A}}
\def\mm{{\mathcal M}}
\def\mn{{\mathcal N}}
\newcommand{\open}{\Bbb}
\newcommand{\oN}{{\open N}}
\newcommand{\psfrag}[2]{}
 \theoremstyle{plain}
  \newtheorem{theorem}{Theorem}[section]
  \newtheorem{lemma}[theorem]{Lemma}
  \newtheorem{corollary}[theorem]{Corollary}
  \theoremstyle{definition}
  \newtheorem{definition}[theorem]{Definition}
  \theoremstyle{remark}
\newcommand\chaptercontents{
{\global
\@topnum\z@ 
\@afterindentfalse 
\if@twocolumn
\@restonecoltrue
\onecolumn 
\else 
\@restonecolfalse 
\fi 
\vspace*{10pt}
\noindent 
{\small\bf Contents}\par 
\vskip1em 
\nobreak} 
{\small
\@starttoc{toc}%
}\if@restonecol
\twocolumn
\fi}
\renewcommand*\l@section[2]{%
\ifnum \c@tocdepth >\z@
\addpenalty\@secpenalty 
\setlength\@tempdima{2.5em}%
\begingroup 
\parindent \z@ 
\rightskip
\@pnumwidth \parfillskip -\@pnumwidth 
\leavevmode 
\advance\leftskip\@tempdima 
\hskip -\leftskip 
sharp1\nobreak\leaderfill\nobreak
\hb@xt@\@pnumwidth{\hss sharp2}\par 
\endgroup 
\fi}
\renewcommand*\l@section{\@dottedtocline{1}{0.1em}{1.3em}}       
\renewcommand*\l@subsection{\@dottedtocline{2}{1.5em}{2em}}      
\renewcommand*\l@subsubsection{\@dottedtocline{3}{3.5em}{2.6em}} 
\begin{document}

\title{SORT LOGIC AND FOUNDATIONS OF MATHEMATICS\thanks{Published in \cite{MR3205075}.}}

\author{Jouko V\"a\"an\"anen\\
Department of Mathematics and Statistics, University of Helsinki\\
Institute for Logic, Language and Computation, University of Amsterdam\\
jouko.vaananen@helsinki.fi}

\maketitle

\begin{abstract}
I have argued elsewhere \cite{meBSL} that second order logic provides a foundation for mathematics much in the same way as set theory does, despite the  fact that the former is second order and the latter first order, but second order logic is marred by reliance on ad hoc {\em large domain assumptions}. In this paper I argue that sort logic, a powerful extension of second order logic, provides a foundation for mathematics without any ad hoc large domain assumptions. The large domain assumptions are replaced by ZFC-like axioms. Despite this resemblance to set theory sort logic retains the structuralist approach to mathematics characteristic of second order logic. As a model-theoretic logic sort logic is the strongest logic. In fact, every model class definable in set theory is the class of models of a sentence of sort logic. Because of its strength sort logic can be used to formulate particularly strong reflection principles in set theory.

\end{abstract}

\vspace*{12pt}   


\section{Introduction}

Sort logic, introduced in \cite{MR567682}, is a many-sorted  extension of second order logic.  In an exact sense it is the strongest logic that there is.  In this paper sort logic is suggested as a foundation of mathematics and contrasted to second order logic and to set theory. It is argued that sort logic solves the problem of second order logic that existence proofs of structures rely on ad hoc large domain assumptions.

The new feature in sort logic over and above what first and second order logics have is the ability to ``look outside" the model, as for a group to be the multiplicative group of a field requires reference to a zero element outside the group, or for a Turing machine, defined as a finite set of quadruples, to halt requires reference to a tape potentially much bigger than the Turing machine itself. 

In computer science it is commonplace to regard a database as a many-sorted structure. Each column (attribute) of the database has its own range of values, be it a salary figure, gender, department, last name, zip code, or whatever. In fact, it would seem very unnatural to lump all these together into one domain which has a mixture of numbers, words, and strings of symbols. To state that a new column can be added to a database, e.g. a salary column, involves stating that new elements, namely the salary values, can be added to the overall set of objects referred to in the database.

In a sense ordinary second order logic  also ``looks outside" the model as well as one can think of the bound second order variables as first order variables ranging over the domain of all subsets and relations on the original domain. In fact, one of the best ways to understand second order logic is to think of it as a two-sorted first order logic in which one sort---the sort over which the second order variables range---is assumed to consist of {\em all} subsets and relations of the other sort. When ``all subsets and relations" is replaced by ``enough subsets and relations to satisfy the Comprehension Axioms", we get semantics relative to which there is a Completeness  of Henkin \cite{MR0036188}. The same is true of sort logic.

To get a feeling of sort logic, let us consider the following formulation of the field axioms in a many-sorted first order logic with two sorts of variables. We use variables $x,y$ and $z$ for the  sort of the multiplicative group, and $u,v$ and $w$ for the  sort of the additive group. The function $\cdot$ and the constant $1$ are of the first sort and the function $+$ and the constant $0$ of the second sort:

\begin{equation}\label{multi}
\begin{array}{l}
\phi=\left\{\begin{array}{l}
\forall x\forall y\forall z((x\cdot y)\cdot z=x\cdot (y\cdot z))\\
\forall x(x\cdot 1=1\cdot x =x)\\
\forall x\forall y(x\cdot y= y\cdot x)\\
\forall x\exists y(x\cdot y =1)\end{array}\right.\\
\psi=\left\{\begin{array}{l}\forall x\forall y\forall z((x+ y)+ z=x+ (y+ z))\\
\forall x(x+ 0=0+ x=x)\\
\forall x\forall y(x+ y= y+ x)\\
\forall x\exists y(x+ y=0)\\
\forall x\forall y\forall z(x\cdot(y+z)=x\cdot y+x\cdot z)\\
\forall x\exists u(x=u)\wedge\forall u\exists x(u=0\vee u=x)
\end{array}\right.
\end{array}\end{equation}

\noindent We have separated the multiplicative group into  the first sort and the additive group  in the second sort. With this separation of the group and the bigger field part we can ask questions such as: $$\mbox{What kind of groups are the multiplicative group of a field?}$$ And the answer is: exactly the groups that satisfy 
\begin{equation}\label{sort}
\mbox{For some $+$ and for some $0$:\ }\ \phi\wedge\psi.
\end{equation}
The truth of the sentence (\ref{sort}) in a given group means that there is something out there outside the group, in this case the element $0$, which together with the new function $+$ defines a field.

For a different type of example, suppose 
\begin{equation}\label{exm1}
\phi
\end{equation}
is a finite second order axiomatization of some mathematical structure  in the vocabulary $\{R_1,...,R_n\}$. Suppose we want to say that $\phi$ has a model. So let us  take a new unary predicate $P$ and consider the sentence

\begin{equation}\label{exm2}
\exists P(\exists R_1\ldots\exists R_n\phi)^{(P)},
\end{equation}
where $\psi^{(P)}$ means the relativization of $\psi$ to the unary predicate $P$. What (\ref{exm2}) says in a model is that there are a subset $P$ and relations $R_1,...,R_n$ on $P$ such that $$(P,R_1,...,R_n)\models\phi.$$ So in any model which is big enough to include a model of $\phi$ the sentence   (\ref{exm2}) says that there indeed is such a model. But in smaller models (\ref{exm2}) is simply false, even though $\phi$ may have models. So (\ref{exm2}) does not really express the existence of a model for $\phi$. The situation would be different if we allowed $``\exists P\exists R_1\ldots\exists R_n"$ to refer to outside the model. In sort logic, which we will introduce in detail below, the meaning of the sentence

\begin{equation}\label{exm3}
\Exists R_1\ldots\Exists R_n\phi,
\end{equation}
is that there is a {\em new} domain of objects with new relations $R_1,...,R_n$ such that $\phi$ holds. Thus (\ref{exm3}) expresses the semantic consistency of $\phi$ independently of the model where it is considered.

In algebra concepts such as a module $P$ being projective, a group $F$ being free, etc,  are defined by reference to arbitrary modules $M$ and arbitrary groups $G$ with no concern as to whether such modules $M$ can be realized inside $P$, or whether such groups $G$ could be realized inside $F$. Even if it turned out that they could be so realized, the original concepts certainly referred to quite arbitrary objects $N$ and $G$ in the universe of all mathematical objects. Lesson: Apparently second order concepts in mathematics sometimes refer to outside the structure being considered.

Reference outside is, of course, most blatant in set theory where objects are defined by reference to the entire universe of sets. In practice one can in most cases limit the reference to some smaller part of the universe, but very often not to the elements or to the power-set of the object being defined.

\section{Sort logic}

Many-sorted logic has several domains, and variables for each domain, much like vector spaces have a scalar-domain and a vector-domain and different variables for each, or as geometry has different variables for points and lines. It seems to have been first considered by Herbrand, and later by Schmidt, 
Feferman \cite{MR0406772}, and others.


\subsection{Basic Concepts}

%
%

\def\sharp{\mathfrak{a}}
\def\srt{\mathfrak{s}}

A {\em (many-sorted) vocabulary\index{vocabulary}} is any set $L$ of
{predicate symbols} $P, Q, R, \ldots$. We leave function and constant symbols out for simplicity of presentation. We use natural numbers as names for sorts.

Each vocabulary $L$ has
an {\em arity-function}
$$\sharp_L : L \to \oN$$
which tells the arity of each predicate symbol, and a {\em sort-function} 
$$\srt_L:L\to\bigcup_n\oN^n, \srt_L(R)\in\oN^{\sharp_L(R)},$$
which tells what are the sorts of the elements of the tuple in a relation.
Thus if $P \in L$, then $P$
is an $ \sharp_L (P)$-ary predicate symbol for a relation of $\sharp_L(P)$-tuples of elements of sorts $n_1,\ldots,n_k$, where $(n_1,\ldots,n_k)=\srt(P)$.  So we can read off from every $n$-ary predicate symbol what the sorts of the elements are in the $n$-tuples of the intended relation. In other words, we do not have symbols for abstract relations between elements of arbitrary sorts (except identity $=$).

\subsection{Syntax}

The syntax of sort logic is very close to the syntax of second order logic. In effect we just add a new form of formula $\Exists P\phi$ with the intuitive meaning that there is a predicate $P$ of {\em new} sorts of elements such that $\phi$.

Suppose $L$ is a vocabulary.
Variable symbols for individuals are $x,y,z,...$ with indexes $x_0,x_1,...$ when necessary, and for relations  $X,Y,Z,...$ with indexes $X_0,X_1,...$. 
Each individual variable $x$ has a sort $\srt(x)\in\oN$ associated to it, so it is a variable for elements of sort $\srt(x)$. Each relation variable 
$X$ has an arity $\sharp(X)$ and a sort $\srt(X)\in\oN^{\sharp(R)}$ associated to it, so it is a relation variable for a relation between elements of the sorts $n_1\dot n_k$, where  $\srt(X)=(n_1\dot n_k)$.

The {\em logical symbols} of sort logic  of
the vocabulary $L$ are
$${\approx},\neg,\wedge,\vee,\forall,\exists,(,),x,y,z,\ldots,X,Y,Z,\ldots.$$
$L$-{\em equations} are of the form
$x=y$ where $x$ and $y$ can be variables of any sorts.
$L$-{\em atomic
formulas} are either $L$-{equations} or of the form $Rx_1 \ldots
x_k$, where $R \in L$, $\srt_L(R)=(n_1,\ldots,n_k)$, and $x_1, ..., x_k$ are
individual variables such that $\srt(x_i)=n_i$ for $i=1\dot k$. A {\em basic
formula} is an atomic formula or the
negation of an atomic formula. $L$-{\em formulas} are of the form:

\begin{enumerate}

\item 
$x=y$ 
\item 

$R(x_1 \dot x_n)$,\mbox{ when $\srt_L(R)=(\srt(x_1),...,\srt(x_n))$}

\item 
$X(x_1 \dot x_n)$,\mbox{ when $\srt(X)=(\srt(x_1),...,\srt(x_n))$}

\item 
$\neg \varphi$

\item 
$(\varphi \vee \psi)$

\item 
$\exists x  \varphi$. 

\item 
$\exists X \varphi$. 
\item 
$\Exists X \varphi$. {\bf New Sort Condition}: If  $\srt(X)=(n_1\dot n_k)$, then $\phi$ has no free variables or symbols of $L$, other than $X$, of a sort $n_i$ or of the sort $(m_1\dot m_l)$ with $\{m_1\dot m_l\}\cap\{n_1\dot n_k\}\ne\emptyset$. 
\end{enumerate}

The reason for the New Sort Condition is that the domains of the elements referred to by the free variables of $\phi$ are fixed already so they should not be altered by the  $\Exists$-quantifier.

We treat $\phi\wedge\psi$, $\phi\to\psi$, $\forall x\phi$ and $\Forall X\phi$ as shorthands obtained from disjunction and existential quantification by means of negation.

The concept of a free occurrence of a variable in a formula is defined as in first order logic. As a new concept we have the concept of a {\em free occurrence of a sort} in a formula. We define it as follows, following the intuition that if a sort  occurs ``free" in a formula, either as the sort of an individual variable, relation variable or predicate symbol, then to understand the meaning of the formula in a model we have to fix the domain of elements of that sort. Respectively, if a sort has only ``bound" occurrences in a formula, we can understand the meaning of the formula in a model without fixing the domain of elements of that sort, rather, while evaluating the meaning of the formula in a model we most likely try different domains of elements of  that sort.

\def\fs{\mathfrak{fs}}

The {\em free sorts} $\fs(\phi)$ of a formula are defined as follows:

\begin{enumerate}

\item 
$\fs(x=y)=\{ \srt(x),\srt(y)\}$
\item 

$\fs(Rx_1 \ldots x_n)=\{\srt(x_1),...,\srt(x_n)\}$

\item 
$\fs(Xx_1 \ldots x_n)=\{\srt(x_1),...,\srt(x_n)\}$

\item 
$\fs(\neg \varphi)=\fs(\phi)$

\item 
$\fs(\varphi \vee \psi)=\fs(\phi)\cup\fs(\psi)$

\item 
$\fs(\exists x  \varphi)=\fs(\phi)\cup\{\srt(x)\}$

\item 
$\fs(\exists X \varphi)=\fs(\phi)\cup\{n_1\dot n_k\}$, if $\srt(X)=(n_1\dot n_k)$. 

\item 
$\fs(\Exists X \varphi)=\fs(\phi)\setminus\{n_1\dot n_k\}$, if $\srt(X)=(n_1\dot n_k)$. 
\end{enumerate}

\subsection{Axioms}

Below $\phi(y/x)$ means the formula obtained from $\phi$ by replacing $x$ by $y$ in its free occurrencies. Substitution should respect sort.

\begin{definition}
The axioms of sort logic are as follows:

\noindent{\bf Logical axioms:}\begin{itemize}
\item Tautologies of propositional logic.
\item Identity axioms: {}{$x=y$, $x=y\to y=x$,  $(x_1=y_1\wedge...\wedge x_n=y_n\wedge\phi)\to\phi(y_1...y_n/x_1...x_n)$}, for atomic $\phi$
\item Quantifier axioms: 
	\begin{itemize}
	\item {}{$\phi(y/x)\to\exists x\phi$}, if $y$ is free for $x$ in $\phi$ in the usual sense. 
	\item {}{$\phi(Y/X)\to\exists X\phi$}, if $Y$ is free for $X$ in $\phi$ in the usual sense. 
	\item {}{$\phi(Y/X)\to\Exists X\phi$}, if $Y$ is free for $X$ in $\phi$ in the usual sense. 
	\end{itemize}
\end{itemize}

\noindent {\bf The rules of proof:}

\begin{itemize}
\item  Modus Ponens $\{\phi,\phi\to\psi\}\models\psi$
\item Generalization 
\begin{itemize}

\item $\{\Sigma,\phi\to\psi\}\models\exists x\phi\to\psi$, if $x$ is not free in $\Sigma\cup\{\psi\}$

\item $\{\Sigma,\phi\to\psi\}\models
	\exists X\phi\to
		\psi$, if $X$ is not free  in $\Sigma\cup\{\psi\}$
\item $\{\Sigma,\phi\to\psi\}\models
	\Exists X\phi\to
		\psi$, if  no free sorts of $\psi$ occur in $\srt(X)$.
\end{itemize}
\end{itemize}

\noindent{\bf First Comprehension Axiom:}
$$\exists X\forall y_1...\forall y_m(Xy_1...y_m\leftrightarrow\psi)$$ for any formula $\psi$ not containing $X$ free, whenever $\srt(X)=(\srt(y_1),\ldots,\srt(y_m)).$

\noindent{\bf Second Comprehension Axiom:}
$$\Exists X\forall y_1...\forall y_m(Xy_1...y_m\leftrightarrow\psi)$$ for any formula $\psi$ not containing $X$ free, whenever $\srt(X)=(\srt(y_1),\ldots,\srt(y_m)).$

\end{definition}

\def\U{\mathcal{U}}
 
The logical axioms and the rules of proof are clearly indispensable and are directly derived from corresponding axioms and rules of first order logic. The difference between the axioms $\phi(Y/X)\to\exists X\phi$ and $\phi(Y/X)\to\Exists X\phi$ is the following: Both take $\phi(Y/X)$ as a hypothesis. The conclusion $\exists X\phi$ says of the current sorts that a relation $X$ satisfying $\phi$ exists, namely $Y$. If $\srt(X)=(n_1\dot n_k)$, then the  conclusion $\Exists X\phi$ says of the sorts other than $n_1\dot n_k$ that domains for the sorts $n_1\dot n_k$ exists so that in the combined structure of the old and new domains a relation $X$ satisfying $\phi$ exists, namely $Y$.
 The Comprehension Axiom is the traditional (impredicative) axiom schema which gives second order logic, and in our case sort logic, the necessary power to do mathematics \cite{MR0351742}. In individual cases less comprehension may be sufficient but this is the general schema. The difference between the First and the Second Comprehension Axiom is that the former stipulates the existence of a relation $X$ defined by $\psi$ in the structure consisting of the existing sorts, while the latter says that this is even true if the sorts of elements and relations that $\psi$

If we limit ourselves to just one sort, for example $0$, we get exactly the classical second order logic.

\subsection{Semantics}

We now define the semantics of sort logic. This is very much like the semantics of second order logic, except that we have to take care of the new domains that may arise from interpreting quantifiers of the form $\Exists$ and $\Forall$.

\begin{definition}\label{stru defn}
An $L$-{\em structure\index{structure}} (or $L$-{\em
model\index{model}\index{model|see{also structure}}}) is a function $\mm$  defined on $L$ with the following properties:
\begin{enumerate}

\item If $R\in L$ and $\srt(R)=(n_1,...,n_k)$ then $n_i\in\dom(\mm)$ and $M_{n_i}=_{df}\mm(n_i)$ is a non-empty set for each $i\in\{1,...,k\}$. 
\item If $R \in L$ is an $k$-relation symbol and $\srt(R)=(n_1,...,n_k)$, then
$\mm (R) \subseteq M_{n_1}\times\ldots\times M_{n_k}$.

\end{enumerate}
\end{definition}

We usually shorten $\mm(R)$ to $R^\mm$. If no confusion arises, we use the notation 
$$\mm=(M_{n_1},\ldots,M_{n_l};R_1^\mm,\ldots,R_m^\mm)$$
for a many-sorted structure with universes $M_{n_1},\ldots,M_{n_l}$ and relations $R_1^\mm,\ldots,R_m^\mm$ between elements of some of the universes.
A vector space with scalar field $F$ and vector group $V$ would be denoted according to this convention (taking functions and constant relationally):
$$(V,F;\ \cdot\ ,1,\ +\ ,0).$$

\begin{definition}\label{semantics_fol}
An {\em assignment\index{assignment}} into an $L$-structure $\mm$ is any
function $s$ the domain of which is a set of individual variables, relation  variables and natural numbers such that 
\begin{enumerate}

\item 
If $x\in\dom(s)$, then $\srt(x)\in\dom(\mm)$ and $s(x)\in M_{\srt(x)}$. 
\item 
If $X\in\dom(s)$ with $\srt_L(X)=(n_1,\ldots,n_k)$, then $n_1,\ldots, n_k\in \dom(\mm)$ and $s(X)\subseteq M_{n_1}\times\ldots\times M_{n_k}$.

\end{enumerate}

\end{definition}



%
A {\em modified assignment} is defined as follows:

\begin{eqnarray*}
s[a/x](y) &=& \left\{
              \begin{array}{ll}
              a & \textrm{if $y=x$} \\
              s(y) & \textrm{otherwise}.
              \end{array}\right.\\          
    s[A/X](Y) &=& \left\{
              \begin{array}{ll}
              A & \textrm{if $Y=X$} \\
              s(Y) & \textrm{otherwise}.
              \end{array}\right.              
\end{eqnarray*}

Suppose $\srt(X)=(n_1\dot n_k)$. A model $\mm'$ is an {\em $X$-expansion} of a model $\mm$ if $\{n_1\dot n_k\}\cap\dom(\mm)=\emptyset$, $\dom(\mm')=\dom(\mm)\cup \{n_1\dot n_k\}$, and $\mm'\restriction\dom(\mm)=\mm$.

\begin{definition}
The {\em truth} of $L$-formulas in $\mm$ under $s$ is defined as
follows:

\begin{description}

\item[(1)] 
$\mm \models_s R(x_{1} \dot x_{n})$ if and only if  $(s(x_{1}), \ldots, s(x_{n})) \in\mm (R),$ 

\item[(2)] 
$\mm \models_s x=y$ if and only if  $s(x)=s(y)$,

\item[(3)] 
$\mm \models_s \neg \varphi$ if and only if  $\mm \nvDash_s
\varphi$,

\item[(4)] 

$\mm \models_s (\varphi \vee \psi)$ if and only if  $\mm \models_s
\varphi$ or $\mm \models_s \psi$

\item[(5)] 
$\mm \models_s \exists x \varphi$ if and only if 
 $\mm
\models_{s[a/x]} \varphi$ for some $a\in M_{\srt(x)}$, 

\item[(6)] 
$\mm \models_s \exists X\varphi$ if and only if  $\mm
\models_{s[A/X]} \varphi$ for some $A\subseteq M_{n_1}\times\ldots\times M_{n_k}$, 
where $\srt(X)=(n_1\dot n_k)$,
\item[(7)] 
$\mm \models_s \Exists X\varphi$ if and only if  $\mm'
\models_{s[A/X]} \varphi$ for some $X$-expansion  $\mm'$ of $\mm$ and some $A\subseteq M'_{n_1}\times\ldots\times M'_{n_k}$, 
where $\srt(X)=(n_1\dot n_k)$.

\end{description}
\end{definition}

Since (7)  of the above truth definition involves unbounded quantifiers over sets, the definition has to be given separately for formulas of quantifier-rank at most a fixed natural number $n$. When $n$ increases, the definition itself gets more complicated in the sense of the quantifier rank. 

As in second order logic, there is a looser concept of a model, one relative to which we can prove a Completeness Theorem. This concept permits also a uniform definition.


\begin{definition}\label{stru defn1}
A {\em Henkin} $L$-{\em structure\index{structure}} (or Henkin $L$-{\em
model\index{model}\index{model|see{also structure}}}) is a triple $(\mm,\U,\G)$, where $\mm$ is an $L$-structure, $\U$ is a set such that $\emptyset\notin\U$ and $\G$ is a set of relations between elements of the domains of $\mm$ and the sets in $\U$. We assume that the First and the Second Comprehension Axioms are satisfied by $(\mm,\U,\G)$ in the sense defined below.
\end{definition}

The idea is that $\U$ gives a set of possible domains for the new sorts needed for the truth conditions of the $\Exists$-quantifiers, and $\G$ gives a set of possible relations needed for the truth conditions of the $\exists$-quantifiers. Since $\U$ is not the class of all sets (as it is a set) and $\G$ need not be the set of {\em all} relevant relations, the structures $(\mm,\U,\G)$ are more general than the structures $\mm$. The original structures $\mm$ are called {\em full}.

An {\em assignment\index{assignment}} and a {\em modified assignment} for a Henkin $L$-structure $(\mm,\U,\G)$ is defined as for ordinary structures. Suppose $\srt(X)=(n_1\dot n_k)$. A model $\mm'$ is an {\em $X$-expansion in $\U$} of a model $\mm$ if $\{n_1\dot n_k\}\cap\dom(\mm)=\emptyset$, $\dom(\mm')=\dom(\mm)\cup \{n_1\dot n_k\}$, $\mm'\restriction\dom(\mm)=\mm$, and $\mm'(n_i)\in\U$ for all $i=1\dot k$.

\begin{definition}
The {\em truth} of $L$-formulas in $(\mm,\U,\G)$ under $s$ is defined as
follows:

\begin{description}

\item[(1)] 
$(\mm,\U,\G) \models_s R (x_{1} \dot x_{n})$ if and only if  $(s(x_{1}), \ldots, s(x_{n})) \in\mm (R)$, 

\item[(2)] 
$(\mm,\U,\G) \models_s x=y$ if and only if  $s(x)=s(y)$,

\item[(3)] 
$(\mm,\U,\G) \models_s \neg \varphi$ if and only if  $(\mm,\U,\G) \nvDash_s
\varphi$,

\item[(4)] 

$(\mm,\U,\G) \models_s (\varphi \vee \psi)$ if and only if  $(\mm,\U,\G) \models_s
\varphi$ or $(\mm,\U,\G) \models_s \psi$,

\item[(5)] 
$(\mm,\U,\G) \models_s \exists x \varphi$ if and only if 
 $(\mm,\U,\G)
\models_{s[a/x]} \varphi$ for some $a\in M_{\srt(x)}$, 

\item[(6)] 
$(\mm,\U,\G) \models_s \exists X\varphi$ if and only if  $(\mm,\U,\G)
\models_{s[A/X]} \varphi$ for some $A\in\P(M_{n_1}\times\ldots\times M_{n_k})\cap\G$, 
where $\srt(X)=(n_1\dot n_k)$,
\item[(7)] 
$(\mm,\U,\G) \models_s \Exists X\varphi$ if and only if  $(\mm,\U,\G)'
\models_{s[A/X]} \varphi$ for some $X$-expansion  $\mm'$ of $\mm$ in $\U$ and some $A\in\P(M'_{n_1}\times\ldots\times M'_{n_k})\cap\G$, 
where $\srt(X)=(n_1\dot n_k)$.

\end{description}
\end{definition}







The following characterization of provability in sort logic is proved as the corresponding result for type theory \cite{MR0036188}:

\begin{theorem}{\bf (Completeness Theorem)}
The following conditions are equivalent for any sentence $\phi$ of sort logic and any countable theory $T$ of sort logic: \begin{enumerate}
\item $T\models\phi$.
\item Every Henkin model of $T$ satisfies $\phi$.
\item Every countable Henkin model of $T$ satisfies $\phi$.
\end{enumerate} 
\end{theorem}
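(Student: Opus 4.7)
The plan is to establish the cycle $(1)\Rightarrow(2)\Rightarrow(3)\Rightarrow(1)$, adapting the Henkin completeness proof for type theory that the paper cites. Reading ``$T\models\phi$'' in (1) as derivability from the displayed axioms and rules (consistent with the way $\models$ is used in the statements of modus ponens and generalization above), the implication $(2)\Rightarrow(3)$ is immediate. For $(1)\Rightarrow(2)$ I would verify soundness by induction on the length of a derivation. Propositional tautologies, identity axioms, and the ordinary quantifier axioms go through as in first-order logic. The axiom $\phi(Y/X)\to\Exists X\phi$ is valid in any Henkin structure because, if the sorts of $X$ are already active, the structure is its own $X$-expansion with $Y\in\G$ witnessing $\Exists X\phi$. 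Both Comprehension Axioms hold by the definition of a Henkin structure. The generalization rule for $\Exists X$ invokes the side condition that no free sort of $\psi$ occurs in $\srt(X)$; this precisely matches the semantic clause for $X$-expansions, since extending by disjoint fresh sorts cannot alter the truth value of $\psi$.

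The substantive direction is $(3)\Rightarrow(1)$, by contrapositive. Assume $T\cup\{\neg\phi\}$ is consistent. I would first expand $L$ to a countable language $L^*$ by adjoining countably many fresh individual constants of each currently active sort, countably many fresh relation symbols for every finite sort-tuple, and a reservoir of fresh sort numbers each equipped with its own infinitely many fresh constants and predicate symbols, to be activated on demand. Interleaving with an enumeration of all $L^*$-sentences of the forms $\exists x\,\psi$, $\exists X\,\psi$, and $\Exists X\,\psi$, I would extend $T\cup\{\neg\phi\}$ to a maximal consistent theory $T^*$ with the Henkin witness property: each $\exists x\,\psi\in T^*$ gets a witness $\psi(c/x)\in T^*$; each $\exists X\,\psi\in T^*$ gets a witness $\psi(P/X)\in T^*$; and each $\Exists X\,\psi\in T^*$ gets a witness $\psi(Q/X)\in T^*$ where $Q$ is a fresh predicate symbol whose sorts are brand-new sort numbers inhabited by previously unused constants. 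The New Sort Condition on $\Exists$ is what keeps the last step safe: the fresh sorts are disjoint from everything mentioned in prior sentences, so populating them cannot render any existing sentence inconsistent.

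From $T^*$ I would build the usual term model $\mm$. For each sort $n$ activated by $T^*$, let $M_n$ be the set of equivalence classes of $L^*$-constants of sort $n$ modulo provable equality, and interpret each predicate symbol by the tuples of classes it provably holds of. Take $\U$ to be the collection of those $M_n$ arising from sorts introduced by $\Exists$-witnesses, and $\G$ to be the collection of relations named by some $L^*$-predicate symbol, equivalently those defined by some $L^*$-formula. The First Comprehension Axiom is then validated because every $L^*$-formula already has an extension in $\G$, and the Second Comprehension Axiom is validated because the fresh predicate symbols on fresh sorts serve as its witnesses. A Truth Lemma ``$(\mm,\U,\G)\models_s\sigma \iff \sigma\in T^*$'' is proved by induction on $\sigma$, the quantifier cases being exactly where the witness property is used. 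Thus $(\mm,\U,\G)$ is a countable Henkin model of $T\cup\{\neg\phi\}$, contradicting (3).

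The main obstacle is the bookkeeping for the $\Exists$ quantifier. The language expansion must be stratified so that whenever a new $Q$ is introduced to witness some $\Exists X\,\psi$, brand-new sort numbers together with a fresh countable supply of constants and predicate symbols of those sorts become simultaneously available, while further fresh sorts remain in reserve for later $\Exists$-witnessing. Once this stratification is in place and one checks at each stage that only finitely many sorts are active, each with infinitely many unused constants, the remainder of the construction and the Truth Lemma go through as in Henkin's original completeness proof for type theory.
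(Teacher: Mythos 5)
Your proposal is correct and takes essentially the approach the paper intends: the paper offers no proof of its own, remarking only that the theorem ``is proved as the corresponding result for type theory'' (Henkin 1950), and your soundness check plus maximal-consistent-extension, term model, and Truth Lemma --- with fresh sorts held in reserve to witness $\Exists X\,\psi$ and with $\U$ and $\G$ taken to be the term-model domains and definable relations --- is exactly that adaptation. The only caveat, inherited from the paper rather than introduced by you, is that the official definition of an $X$-expansion requires the sorts of $X$ to be disjoint from $\dom(\mm)$, so both the validity of $\phi(Y/X)\to\Exists X\phi$ and the $\Exists$-case of the Truth Lemma should strictly be stated for the reduct of the structure to the free sorts of the formula, in line with the paper's own informal reading of $\Exists$ as allowing an existing sort to be ``replaced.''
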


This characterization shows that our axioms for sort logic capture the intuition of sort logic in a perfect manner, at least if our Henkin semantics does. Out Henkin semantics is very much like that of second order logic.

\section{Sort logic and set theory}

In this chapter we look at sort logic from the point of vies of set theory.



\begin{definition}
We use $\Delta_n$ to denote the set of formulas of sort logic which are (semantically) equivalent both to a $\Sigma_n$-formula of sort logic, and to a $\Pi_n$-formula of sort logic. 
\end{definition}

\begin{theorem}\cite{MR567682}\cite{MR515154}
The following conditions are equivalent for any model class $K$ and for any $n>1$:
\begin{description}

\item[(1)] $K$ is definable in the logic $\Delta_n$.
\item[(2)] $K$ is $\Delta_n$-definable in the Levy-hierarchy.

\end{description}
\end{theorem}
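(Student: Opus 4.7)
The plan is to prove the two implications separately by translating between sort logic formulas and Levy hierarchy formulas of set theory, keeping careful track of the alternation count of the unbounded set-theoretic quantifiers.

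For the direction \textbf{(1)$\Rightarrow$(2)}, I would write out the truth definition for sort logic in the language of set theory by induction on formula complexity. The crucial observation is that ordinary second order satisfaction, i.e.\ the clauses (1)--(6) in the definition of $\mm\models_s\phi$, is $\Delta_1$ uniformly in $\mm$, $s$, $\phi$: quantification over subsets and relations of the domains is bounded by the power set operation, which is itself $\Delta_1$, and the recursion on formulas is finite. Each occurrence of $\Exists X$ in clause (7) adds one unbounded existential quantifier ``there exist new domains $M'_{n_1},\ldots,M'_{n_k}$ and $A\subseteq M'_{n_1}\times\cdots\times M'_{n_k}$ such that\ldots'', and dually $\Forall X$ adds an unbounded universal. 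Hence a $\Sigma_n$ sort logic sentence (with $n$ alternations of $\Exists/\Forall$, starting with $\Exists$) has satisfaction predicate $\Sigma_n$ in the Levy hierarchy; symmetrically for $\Pi_n$. Intersecting gives $\Delta_n$ in both senses.

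For the direction \textbf{(2)$\Rightarrow$(1)}, I would start with a $\Sigma_n$ Levy formula $\Phi(\mm)$ that defines $K$, where structures are coded as sets in the standard way (for the purposes of this proof I may fix a coding of a many-sorted relational $\mm$ by a single set $\hat{\mm}$). I then translate $\Phi$ into a $\Sigma_n$ sort logic sentence as follows: each unbounded quantifier $\exists y$ of $\Phi$ is replaced by a block $\Exists Y\exists E$, where $Y$ is a fresh sort and $E$ is a fresh binary relation symbol on $Y$, plus a second-order conjunct stating that $(Y,E)$ is a well-founded extensional structure coding a transitive set and that the point designated by $y$ corresponds to a distinguished element; dually for $\forall y$. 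The first-order matrix of $\Phi$, written in the $\in$-language with the parameter $\hat\mm$ substituted by its coded counterpart in $(Y,E)$, becomes a second order sort logic formula (well-foundedness is $\Pi^1_1$ but appears only inside the matrix, below all $\Exists/\Forall$ alternations, so it does not affect the count). Since each $\exists y\ /\ \forall y$ of $\Phi$ is converted into exactly one $\Exists\ /\ \Forall$ block, the resulting formula is $\Sigma_n$ in sort logic, and, applying the translation to both the $\Sigma_n$ and the $\Pi_n$ representatives of $\Phi$, we obtain $\Delta_n$ definability in sort logic.

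The main obstacle I expect is the coding machinery in the second direction: I need the statement ``$(Y,E)$ codes a transitive set and the parameter structure $\mm$ sits inside it correctly'' to be expressible \emph{inside the $\Exists$/$\Forall$ matrix} without introducing further unbounded sort quantifiers, so that the alternation count is preserved. This is achievable because well-foundedness and extensionality of $(Y,E)$ are $\Pi^1_1$ over the new sort $Y$ (ordinary second order quantification over subsets of $Y$), and the embedding of $\mm$ into the coded transitive set can be witnessed by an ordinary second order relation between the old and the new sorts. The hypothesis $n>1$ is what makes this absorption harmless: the extra second order prefix sits comfortably inside a $\Sigma_2$ or larger outer prefix and contributes no new $\Exists/\Forall$ alternations.
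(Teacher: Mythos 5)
Your direction (1)$\Rightarrow$(2) rests on a false base case: the power set operation is not $\Delta_1$ in the Levy hierarchy, and neither is second order satisfaction. The graph $y=\P(x)$ is $\Pi_1$ (the clause ``every subset of $x$ belongs to $y$'' is an unbounded universal quantifier) and is not $\Sigma_1$, since it is not absolute between transitive models. Consequently $\mm\models\phi$ for second order $\phi$ is $\Delta_2$ and provably not $\Delta_1$: second order logic defines non-absolute model classes (e.g.\ ``the domain has cardinality $\aleph_1$''), whereas every $\Delta_1$-definable class is absolute. This is precisely why the theorem is stated for $n>1$. With the correct $\Delta_2$ base case your count ``each $\Exists/\Forall$ block adds one unbounded quantifier, so $\Sigma_n$ sort logic yields $\Sigma_n$ satisfaction'' no longer goes through as written: the innermost block has to absorb a $\Delta_2$ matrix rather than a $\Delta_1$ one, and the indexing of the sort-logic hierarchy has to be checked against this. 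The paper handles the point by quantifying existentially over \emph{supertransitive} models of a large finite fragment of ZFC: supertransitivity is a $\Pi_1$ condition guaranteeing that the model computes power sets (hence second order satisfaction) correctly, and the outer existential over such models is what produces the $\Sigma_2$ base level.

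In direction (2)$\Rightarrow$(1) you take a genuinely different route from the paper: you code each unboundedly quantified set by its own new sort carrying a well-founded extensional relation, whereas the paper introduces a single new sort coding a structure $(V_\alpha,\in)$ with $\alpha=\beth_\alpha$ and $\ma\in V_\alpha$, evaluates the whole $\Sigma_n$ formula there, and proves equivalence with truth in $V$ by two applications of the Levy Reflection Principle (using $H_\alpha=V_\alpha$ to transfer the $\Pi_1$ part upward). Your version is workable in principle but omits the step that does the real work: the $\Delta_0$ matrix $P(x_1,\ldots,x_n,\ma)$ asserts membership relations \emph{between} the witnesses, which in your setup live in different codes $(Y_i,E_i)$; to evaluate it you must quantify (with ordinary second order $\exists$, so as not to disturb the alternation count) over partial isomorphisms identifying the common transitive parts of the various codes and of the code of $\ma$, and you must verify via Mostowski collapse that the set/code correspondence is exact so that $\Forall Y\,\forall E$ really ranges over all sets. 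Spelling this out, or switching to the paper's single-$V_\alpha$ reflection argument, is needed before the second direction is complete.
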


\begin{proof} We give the proof only  in the case $n=2$. The general case is similar. Suppose  $L$ is a finite vocabulary and $\ma$ is a \soc\  $L$-structure. Suppose $\sigma$ is the conjunction of a large finite part of ZFC. Let us call a model $(M,\in)$ of $\theta$ {\em  supertransitive} if for every $a\in M$ every element and every subset of $a$ is in $M$. Let $\Strt(M)$ be a $\Pi_1$-formula which says that $M$ is supertransitive. Let $\Voc(x)$ be the standard definition of ``x is a vocabulary". Let $\So(L,x)$ be the set-theoretical definition of the class of second order $L$-formulas. Let $\Str(L,x)$ be the set-theoretical definition of $L$-structures. Let $\Sat(\ma,\phi)$ be an inductive truth-definition of the $\Sigma_2$-fragment of sort logic written in the language of set theory. Let 
\begin{eqnarray*}
P(z,x,y)&=&\Voc(z)\wedge \Str(z,x)\wedge \So(z,y)\wedge\\
&&\exists M(z,x,y\in M\wedge\sigma^{(M)}\wedge \Strt(M)\wedge (\Sat(z,x,y))^{(M)})
\end{eqnarray*}
Now  if $L$ is a vocabulary, $\ma$ an $L$-structure, then  $\ma\models\phi\iff P(L,\ma,\phi)$. This shows that $\ma\models\phi$ is a $\Sigma_2$ property of $\ma$ and $L$.

For the converse, suppose the predicate $\Phi$ is  a $\Sigma_2$ property of $L$-structures. There is a $\Sigma_2$-sentence $\phi$ of sort logic such that for all $\mm$, $\mm\in K$ off $\mm\models\phi$.

Suppose $\Phi=\exists x\forall yP(x,y,\ma)$, a $\Sigma_2$-property of $\ma$. Let $\psi$ be a sort logic sentence the models of which are, up to isomorphism, exactly the models $\ma$ for which there is 
$(V_\alpha,\in)$, with $\alpha=\beth_\alpha$, $\ma\in V_\alpha$, and 
$(V_\alpha,\in)\models\exists x\forall yP(x,y,\ma)$. 
If $\exists x\forall yP(x,y,\ma)$ holds, we can 
find a model for $\psi$ by means of the Levy Reflection principle.
On the other hand, suppose $\psi$ has a model $\ma$. W.l.o.g. it is of the form $(V_\alpha,\in)$ with $\ma\in V_\alpha$. Let $a\in V_\alpha$ such that $(V_\alpha,\in)\models\forall y P(a,y,\ma)$. Since in this case $H_\alpha=V_\alpha$,  $(H_\alpha,\in)\models\forall y P(a,y,\ma)$, where $H_\alpha$ is the set of sets of hereditary cardinality $<\alpha$. By another application of the Levy Reflection Principle we get $(V,\in)\models\forall y P(a,y,\ma)$, and we have proved $\exists x\forall yP(x,y,\ma)$.

\end{proof}

By a {\em model class} we mean a class of structures of the same vocabulary, which is closed under isomorphisms. In the context of set theory classes are referred to by their set-theoretical definitions. 

The following consequence was mentioned in \cite{MR0457146} without proof:
\begin{corollary}\cite{MR567682}\cite{MR515154}
Every model class is definable in sort logic. Sort logic is therefore the strongest logic.
\end{corollary}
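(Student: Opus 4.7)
The plan is to reduce the corollary immediately to the preceding theorem. Given a model class $K$ of structures in a fixed vocabulary $L$, the statement ``$K$ is a model class'' means in the set-theoretic context that there is a formula $\Phi(x)$ in the language of set theory such that, for every $L$-structure $\ma$, we have $\ma\in K\iff\Phi(\ma)$. Any such set-theoretic formula $\Phi$ has some Levy complexity: there exists a natural number $n$ with $\Phi$ equivalent to both a $\Sigma_n$ and a $\Pi_n$ formula (just take $n$ large enough so that $\Phi\in\Sigma_n\cup\Pi_n$ and then pad with dummy quantifiers to obtain a $\Delta_n$ presentation). Without loss of generality we may assume $n>1$, since any formula is $\Delta_{n+1}$ as soon as it is $\Sigma_n$ or $\Pi_n$.

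Having arranged $K$ to be $\Delta_n$-definable in the Levy hierarchy for some $n>1$, we invoke the preceding theorem directly: condition (2) $\Rightarrow$ condition (1) yields a sentence of the logic $\Delta_n$ of sort logic whose class of models is exactly $K$. Since $\Delta_n$ is a sublogic of sort logic, $K$ is definable in sort logic. This proves the first sentence of the corollary.

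For the second sentence, I would make explicit what ``strongest logic'' means. For any abstract logic $\mathcal{L}^*$ whose syntax and semantics can themselves be formalized in set theory (which is the standard framework for abstract model theory), the satisfaction relation $\ma\models_{\mathcal{L}^*}\phi$ is a set-theoretically definable predicate of $(\ma,\phi)$. Hence for every sentence $\phi$ of $\mathcal{L}^*$, the class $\{\ma:\ma\models_{\mathcal{L}^*}\phi\}$ is a model class in the sense above, and by the first part it is definable in sort logic. Thus every such logic $\mathcal{L}^*$ embeds into sort logic, which is the meaning of ``strongest.''

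The argument is essentially a bookkeeping exercise on top of the preceding theorem, so there is no genuine obstacle: the only point requiring care is the mild gap between ``$\Sigma_n$-definable in the Levy hierarchy'' and ``$\Delta_n$-definable in the Levy hierarchy,'' handled by passing from $n$ to $n+1$. The conceptual content is entirely carried by the theorem just proved, which translates $\Delta_n$-definability in set theory into definability in the $\Delta_n$-fragment of sort logic via the inductive truth definition $\Sat$ and supertransitive reflection used there.
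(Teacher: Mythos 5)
Your argument is correct and is exactly the intended derivation: the paper states the corollary as an immediate consequence of the preceding theorem (a model class is by definition set-theoretically definable, hence $\Delta_n$ in the Levy hierarchy for some $n>1$ after padding with dummy quantifiers, hence definable by a $\Delta_n$-formula of sort logic, which is in particular a sort logic sentence). Your explication of ``strongest logic'' via set-theoretically definable satisfaction relations of abstract logics also matches the standard reading the paper intends.
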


The logics $\Delta_n$, $n=2,3,\ldots$, provide a sequence of stronger and stronger logics. Their model theoretic properties can be characterized in set theoretical terms as the following results indicate:

\begin{theorem}\cite{MR567682}
The Hanf-number of the logic $\Delta_n$ is $\delta_n$. The L\"owenheim number of the logic $\Delta_n$ is $\sigma_n$. The decision problem of the logic $\Delta_n$ is the complete $\Pi_n$-definable set of natural numbers.\end{theorem}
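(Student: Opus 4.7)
The plan is to leverage the previous theorem's identification of $\Delta_n$-sort logic definability with $\Delta_n$-Levy definability. Each of the three quantities is a model-theoretic property that translates under this correspondence into a set-theoretic complexity statement, and the required equalities will follow from Levy reflection arguments together with explicit constructions realizing the bounds.

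First I would handle the Löwenheim number. By the previous theorem, a $\Sigma_n$-sentence $\phi$ of sort logic is satisfiable iff $\exists\mm(\mm\models\phi)$ holds in $V$, and this assertion is $\Sigma_n$ in the Levy hierarchy. An application of the Levy Reflection Principle then produces a witnessing $\mm$ inside some $V_\alpha$ with $\alpha<\sigma_n$, establishing $\mathrm{Lw}(\Delta_n)\le\sigma_n$. For the matching lower bound one writes, for each $\Sigma_n$-definable ordinal $\alpha$, a sort logic sentence whose models must interpret, via a new sort introduced by an $\Exists$-quantifier, a copy of $V_\alpha$; the minimal such model has cardinality at least $|V_\alpha|$, and letting $\alpha$ approach $\sigma_n$ yields $\mathrm{Lw}(\Delta_n)\ge\sigma_n$.

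Next I would treat the Hanf number by the symmetric strategy: the assertion that $\phi$ has arbitrarily large models is again of bounded Levy complexity once satisfaction is coded via the previous theorem, so Levy reflection up to $\delta_n$ gives the upper bound, while explicit sort logic sentences whose cardinality spectra exhibit gaps climbing to $\delta_n$ supply the lower bound. For the decision problem, validity of a sort logic sentence $\phi$ reads $\forall\mm(\mm\models\phi)$, which by the previous theorem is $\Pi_n$, so on the Gödel-codes of sentences the valid set is $\Pi_n$-definable; completeness is then shown by reducing an arbitrary $\Pi_n$-definable set $A\subseteq\omega$ to validity of a uniformly constructed sort logic sentence that, on input $k$, uses the $\Exists$-quantifier to internalise the outer universal quantifier of the $\Pi_n$-formula defining $A$ at $k$.

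The main obstacle I anticipate is the explicit construction of the sort logic sentences realising the $\sigma_n$ and $\delta_n$ bounds. One must produce, uniformly in a $\Sigma_n$-definable ordinal parameter, a single sentence whose minimal (respectively gap-admitting) models code transitive set-theoretic levels. This requires careful deployment of the $\Exists$-quantifier in conjunction with the Second Comprehension Axiom, and at each step one must verify the New Sort Condition so that the new domains actually behave as independent universes. The soundness of the whole scheme ultimately rests on the same Levy Reflection technique used in the previous theorem, so the argument will closely parallel its $n=2$ case, with only the Levy complexity counts and the particular ordinals $\sigma_n$, $\delta_n$ changing.
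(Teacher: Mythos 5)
The paper itself offers no proof of this theorem: it is stated as a citation to the reference on abstract logic and set theory, and even the ordinals $\delta_n$ and $\sigma_n$ are left undefined in the text. So there is no in-paper argument to compare yours against; what can be said is that your outline follows the standard route used in that literature and is consistent with the one proof the paper does give (the $n=2$ case of the preceding definability theorem), namely: translate satisfiability, validity, and spectrum statements into Levy-hierarchy assertions via the definability correspondence, use Levy reflection for upper bounds, and build explicit sentences coding levels $V_\alpha$ for lower bounds. That is the right skeleton.

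The genuine gap is that your sketch never engages with what $\sigma_n$ and $\delta_n$ \emph{are}, and the content of the theorem lives exactly there. These are the structural ordinals of the Levy hierarchy (roughly, $\sigma_n$ is the supremum of the $\Sigma_n$-definable ordinals and $\delta_n$ the corresponding ordinal for the Hanf computation), and the upper-bound halves of your argument require more than bare Levy reflection: you must show, e.g., that for a satisfiable $\Sigma_n$-sentence $\phi$ the \emph{least} cardinality of a model of $\phi$ is itself an ordinal definable at the right level, and for the Hanf number that if $\phi$ does not have arbitrarily large models then the supremum of the cardinalities of its models is so definable. The naive complexity count for ``$\phi$ has arbitrarily large models'' is $\Pi_{n+1}$, so without a careful analysis of which quantifiers can be bounded (this is where the distinction between $\sigma_n$ and $\delta_n$ comes from) your reflection step does not land below the intended ordinal. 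The lower bounds and the $\Pi_n$-completeness of validity are sketched correctly in spirit: the hardness reduction is cleanest if you observe that for a $\Pi_n$ set $A$ the class $K_k$ consisting of all structures when $k\in A$ and no structures otherwise is uniformly $\Pi_n$-definable, hence by the preceding theorem is the model class of a uniformly computable sentence $\phi_k$, and $k\in A$ iff $\models\phi_k$. As written, your proposal is an acceptable plan but not yet a proof; the missing piece is the definability analysis of the extremal ordinals attached to a given sentence.
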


\begin{theorem}\cite{MR0295904} The LST-number of $\Delta_2$ is the first supercompact cardinal.

\end{theorem}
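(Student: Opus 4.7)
The plan is to establish both inequalities, where $\kappa$ denotes the first supercompact cardinal.

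For the upper bound $\mathrm{LST}(\Delta_2)\leq\kappa$, I would first establish the set-theoretic lemma $V_\kappa\prec_{\Sigma_2} V$ by the standard supercompact-ultrapower argument: for a true $\Sigma_2$-sentence $\exists x\,\forall y\,P(x,y,a)$ with parameter $a\in V_\kappa$ and witness $x^{*}$, pick $\lambda>\operatorname{rank}(x^{*})$ and let $j:V\to M$ witness $\lambda$-supercompactness of $\kappa$; then $x^{*}\in V_\lambda\subseteq M$ continues to witness the sentence inside $V_{j(\kappa)}^{M}$, and elementarity of $j$ pulls this back to $V_\kappa$. Given now a satisfiable $\Delta_2$-theory $T\in V_\kappa$, the preceding theorem makes the assertion ``$\exists\mb\,(\mb\models T)$'' a $\Sigma_2$-statement, which therefore reflects to $V_\kappa$, yielding $\mb\in V_\kappa$ with $V_\kappa\models(\mb\models T)$. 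A second application of $\Sigma_2$-correctness, this time to the $\Delta_2$-predicate ``$\mb\models T$'' with parameter $\mb$, transports satisfaction back up to $V$, producing a model of $T$ of cardinality less than $\kappa$.

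For the lower bound $\mathrm{LST}(\Delta_2)\geq\kappa$, I would appeal to monotonicity of the LST number under inclusion of logics. An ordinary second-order formula carries no $\Exists$-quantifier, so it lies in $\Sigma_0\cap\Pi_0\subseteq\Delta_2$ of sort logic; hence $\mathrm{LST}(\Delta_2)\geq\mathrm{LST}(\mathrm{SOL})$, and Magidor's classical theorem~\cite{MR0295904} identifies $\mathrm{LST}(\mathrm{SOL})$ with the first supercompact, closing the inequality. A self-contained argument would reprise Magidor's construction: for each $\mu<\kappa$, the failure of Magidor's reflection characterization of supercompactness at $\mu$ produces some $\alpha$ admitting no elementary embedding $V_{\bar\alpha}\to V_\alpha$ with $\bar\alpha<\mu$; a suitable categorical $\Delta_2$-sentence characterizing $(V_\alpha,\in)$ then has no model of cardinality below $\mu$, showing $\mathrm{LST}(\Delta_2)>\mu$. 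The $\Exists$-quantifier actually simplifies this construction, since one may refer to an outer $V_\alpha$ without embedding it into the structure.

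The main obstacle is the $\Sigma_2$-correctness of $V_\kappa$ in the upper bound: what is needed is not merely unbounded L\'evy reflection but genuine two-sided reflection at $\kappa$ itself for $\Sigma_2$-statements with parameters from $V_\kappa$, and this is precisely where the full strength of the first supercompact is consumed. Note that the same template, with $\Sigma_n$-correctness replacing $\Sigma_2$, would generalize to $\Delta_n$ and account for the finer spectrum of L\"owenheim and Hanf numbers described in the preceding theorem.
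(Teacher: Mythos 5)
The paper states this theorem without proof (it is simply attributed to Magidor), so your attempt has to stand on its own, and unfortunately both halves of it conflate the L\"owenheim number with the LST number. The LST number of a logic is the least $\kappa$ such that every \emph{model} $\ma$ of a sentence $\phi$ has a \emph{substructure} $\mb\subseteq\ma$ of cardinality $<\kappa$ with $\mb\models\phi$; the L\"owenheim number merely asks for the existence of \emph{some} small model of $\phi$. The paper lists these as distinct invariants: the L\"owenheim number of $\Delta_2$ is $\sigma_2$, not the first supercompact. Your upper-bound argument reflects the $\Sigma_2$-statement ``$T$ has a model'' into $V_\kappa$ and conjures a small model out of nowhere; it never touches a given model $\ma$, so at best it bounds the L\"owenheim number. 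Your own diagnostic exposes the problem: $V_\kappa\prec_{\Sigma_2}V$ is \emph{not} where supercompactness is consumed. The class of $\alpha$ with $V_\alpha\prec_{\Sigma_2}V$ is a closed unbounded class of ordinals, unbounded even below the first supercompact (below any inaccessible $\kappa$ the set of $\lambda$ with $V_\lambda\prec V_\kappa$ is already club), so its least element is far smaller than the first supercompact. If your argument proved the LST upper bound, it would prove it for that much smaller ordinal and contradict your own lower bound. The correct upper bound applies a $\lambda$-supercompactness embedding $j:V\to M$ to the \emph{structure}: $j\restriction\ma\in M$, so $j[\ma]$ is a substructure of $j(\ma)$ in $M$ of size $<j(\kappa)$ isomorphic to $\ma$; closure of $M$ under $\lambda$-sequences together with the two-sidedness of the $\Delta_2$ definition makes ``$j[\ma]$ is in the model class'' absolute between $M$ and $V$; and elementarity of $j$ then pulls ``$j(\ma)$ has a substructure of size $<j(\kappa)$ in the class'' back to $\ma$ and $\kappa$.

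The lower bound via monotonicity (second-order logic embeds into $\Delta_2$ of sort logic, so $\mathrm{LST}(\Delta_2)\ge\mathrm{LST}$ of second-order logic, which is Magidor's theorem) is fine as a reduction. But your ``self-contained'' sketch repeats the same conflation: a categorical sentence with no models of cardinality below $\mu$ is a lower bound on the L\"owenheim number, and such sentences only reach the supremum of the suitably definable $\beth_\alpha$'s, which has no reason to reach the first supercompact. Magidor's actual construction produces, for each $\mu$ below the first supercompact, a specific structure $(V_\alpha,\in)$ together with a second-order sentence true in it such that any substructure of size $<\mu$ satisfying that sentence would transitively collapse to yield an elementary embedding $V_{\bar\alpha}\to V_\alpha$ of the kind whose existence for all $\alpha$ characterizes supercompactness; it is the failure of that embedding characterization below the first supercompact that kills all small \emph{substructures} while leaving small models of the sentence untouched. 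Until both halves are reformulated around substructures of a given model, the proof does not establish the stated theorem.
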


\begin{theorem} The LST-number of $\Delta_3$ is at least the first extendible cardinal.

\end{theorem}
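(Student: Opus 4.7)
The plan is to apply the characterization proved above, which says that a model class is $\Delta_3$-definable in sort logic if and only if it is $\Delta_3$ in the Levy hierarchy, and then exhibit a $\Delta_3$-definable (in the Levy sense) model class whose members all have cardinality at least $\kappa_0$, the first extendible cardinal. Once such a class is obtained, the corresponding $\Delta_3$-sentence of sort logic is satisfiable but has no model of cardinality below $\kappa_0$, so the LST-number of $\Delta_3$ is at least $\kappa_0$.

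The natural candidate I would take is the class $K$ of structures $\ma$ such that $\ma\cong(V_\alpha,\in)$ for some extendible cardinal $\alpha$. This class is nonempty since $V_{\kappa_0}\in K$, and every member has cardinality $|V_\alpha|=\alpha\ge\kappa_0$ because extendibles are strong limit cardinals with $\alpha=\beth_\alpha$. The condition ``$\ma$ is isomorphic to some $V_\alpha$'' is of low Levy complexity, so the essential task is to argue that ``$\alpha$ is extendible'' can be expressed as both a $\Sigma_3$ and a $\Pi_3$ formula in the Levy hierarchy.

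The $\Pi_3$ direction is immediate from the definition: $\alpha$ is extendible iff for every $\beta>\alpha$ there exist $\gamma$ and an elementary $j:V_\beta\to V_\gamma$ with critical point $\alpha$ and $j(\alpha)>\beta$. For the $\Sigma_3$ direction I would exploit the strong reflection properties of extendibles; in particular, the first extendible cardinal is $\Sigma_3$-reflecting, and a Levy-reflection argument in the spirit of the proof of the characterization theorem above should allow the outer $\forall\beta$ to be traded for an existential search after a sufficiently correct rank-initial segment that witnesses the embeddings cofinally, thereby collapsing the apparent complexity down to $\Sigma_3$.

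The main obstacle is precisely this $\Sigma_3$-upper bound on extendibility. If the direct approach stalls, my fallback would be to replace $K$ by a $\Delta_3$-definable class weaker than extendibility but still forcing each member's cardinality to be at least $\kappa_0$: for instance, the class of $\ma\cong(V_\alpha,\in)$ such that $V_\alpha$ internally satisfies ``there exists an extendible cardinal'', tuned so that $\Sigma_3$-correctness of $\alpha$ makes internal and external extendibility coincide. Since the minimal $\alpha$ witnessing such a class is at most $\kappa_0$, any witness has cardinality at least $\kappa_0$, yielding the lower bound. The same Levy-reflection technique used in the characterization theorem should cover the complexity bookkeeping in either formulation.
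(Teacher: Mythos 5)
The paper states this theorem without proof, so there is no official argument to compare against; judged on its own terms, your proposal has a structural flaw that no amount of ``tuning'' can repair. You reduce the LST lower bound to a L\"owenheim-number lower bound: you want a satisfiable $\Delta_3$ sentence with no model of cardinality below the first extendible cardinal $\kappa_0$. No such sentence exists. Every extendible cardinal is $\Sigma_3$-correct, i.e.\ $V_{\kappa_0}\prec_{\Sigma_3}V$, so if $K$ is any nonempty model class that is $\Sigma_3$ in the Levy hierarchy (in particular, any $\Delta_3$ class), the true $\Sigma_3$ statement ``$K\neq\emptyset$'' reflects to $V_{\kappa_0}$ and, going back up, produces a genuine member of $K$ of rank, hence cardinality, below $\kappa_0$. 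Thus the L\"owenheim number of $\Delta_3$ is strictly below $\kappa_0$ --- consistently with the paper's earlier theorem that this number is $\sigma_n$, which already for $n=2$ lies strictly below the first supercompact even though the LST number equals it. The same reflection argument defeats your main candidate: ``$\alpha$ is extendible'' is $\Pi_3$ but provably \emph{not} $\Sigma_3$ (if it were, ``there is an extendible cardinal'' would be $\Sigma_3$, would reflect into $V_{\kappa_0}$, and $\Sigma_3$-correctness would yield a genuine extendible below $\kappa_0$). Your fallback class, with the $C^{(3)}$-correctness requirement added, does force all members to have size at least $\kappa_0$, but $C^{(3)}$-membership is itself $\Pi_3$, and by the general argument just given this class \emph{cannot} be $\Sigma_3$, hence is not $\Delta_3$; the ``complexity bookkeeping'' is not a technicality to be deferred but exactly the point at which every variant of this strategy must fail.

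A correct proof has to use the substructure clause in the definition of the LST number, which your argument never touches: for each $\mu<\kappa_0$ one must produce a $\Delta_3$ sentence $\phi$ together with one particular large model of $\phi$ that has no substructure of cardinality $<\mu$ satisfying $\phi$, even though $\phi$ itself does have small models. This is how Magidor's argument for $\Delta_2$ and supercompacts runs: the failure of the relevant embedding property below $\mu$ supplies a target $V_\eta$ such that any small substructure satisfying $\phi$ would, after transitive collapse, yield an elementary embedding $V_\beta\to V_\eta$ of the forbidden kind with $\beta<\mu$. For the present theorem one replaces Magidor's supercompactness characterization by the extendibility-type embeddings $j:V_\beta\to V_\gamma$, and the extra quantifier level of $\Delta_3$ (equivalently, the $\Exists$-quantifier of sort logic reaching outside the structure) is what is needed to express that a given structure admits such an embedding into a longer rank-initial segment. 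I would encourage you to rebuild the proof along these lines rather than via the L\"owenheim number.
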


\begin{theorem}
The decision problem of $\Delta_n$ is the complete $\Pi_n$-set of natural numbers.
\end{theorem}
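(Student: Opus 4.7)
I would prove this in two halves: first, that the decision problem of $\Delta_n$-sort-logic lies in $\Pi_n$, and second, that it is $\Pi_n$-hard. The characterization theorem proved above --- identifying $\Delta_n$-definable model classes with $\Delta_n$-predicates of structures in the Levy hierarchy --- does most of the heavy lifting for both halves.

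For the upper bound, the proof of that characterization theorem gives, uniformly in a $\Pi_n$-sentence $\phi$ of sort logic and a structure $\mm$, a $\Pi_n$ set-theoretic predicate equivalent to $\mm \models \phi$. (For $n=2$ this is the predicate $P(L,x,y)$ exhibited explicitly in the earlier proof, invoking a supertransitive model of a finite fragment of ZFC containing all the relevant data; the same scheme, with an inductive truth-definition of the $\Pi_n$-fragment of sort logic written in set theory, works for arbitrary $n$.) Any sentence $\phi \in \Delta_n$ admits a $\Pi_n$-representative, so $\mm \models \phi$ is $\Pi_n$ in $\ulcorner\phi\urcorner$ and $\mm$. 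Validity of $\phi$ is $\forall\mm(\mm \models \phi)$, which prefixes a universal set quantifier to a $\Pi_n$ matrix and thus remains $\Pi_n$.

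For $\Pi_n$-hardness, I fix a $\Pi_n$-formula $\theta(x)$ of set theory defining an arbitrary $\Pi_n$-subset $A$ of $\omega$ and effectively assign to each $k \in \omega$ a sentence $\phi_k$ of $\Delta_n$-sort-logic, in the empty vocabulary, such that $\models \phi_k$ iff $k \in A$. Consider the model class $K_k$ in the empty vocabulary consisting of all structures when $\theta(k)$ holds and of none otherwise. It is defined by the $\Pi_n$-predicate $\Phi(\mm) := \theta(k)$ (vacuously independent of $\mm$), so the characterization theorem supplies a sort-logic sentence $\phi_k$ with $K_k = \{\mm : \mm \models \phi_k\}$, and the construction from the proof of that theorem is effective in $k$ since the only $k$-dependence enters through substitution of the numeral for $k$ into a fixed scheme. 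Hence $\phi_k$ is valid iff $\theta(k)$ holds. Moreover $\phi_k$ is semantically equivalent to $\top$ or $\bot$, so it is trivially equivalent both to a $\Sigma_n$- and to a $\Pi_n$-sort-logic sentence, placing it in $\Delta_n$. The map $k \mapsto \ulcorner\phi_k\urcorner$ is then the sought recursive reduction.

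The main obstacle is bookkeeping rather than conceptual: one must verify that the construction of $\phi_k$ in the proof of the characterization theorem is genuinely uniform and primitive recursive in $k$, and that the set-theoretic $\Pi_n$-truth predicate used in the upper bound is absolute enough between $V$ and the supertransitive $V_\alpha$ appearing in that proof so that the Levy Reflection Principle can be invoked in both directions, exactly as in the $n=2$ case worked out explicitly above.
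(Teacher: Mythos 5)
The paper itself gives no proof of this theorem: it is stated twice, once inside a result cited to \cite{MR567682} and once bare, so there is nothing to compare your argument against except the characterization theorem it leans on. Your two-part argument --- upper bound via the $\Pi_n$ set-theoretic translation of $\mm\models\phi$ together with closure of $\Pi_n$ under unbounded universal quantification, hardness via sentences $\phi_k$ whose validity is decided entirely by a $\Pi_n$ arithmetic fact and which are therefore semantically trivial --- is the standard route and is essentially correct. The verifications you flag at the end (uniformity and primitive recursiveness of the characterization theorem's construction in the set-theoretic formula, and the relativization of the $n=2$ reflection/absoluteness argument to general $n$) are exactly the ones needed.

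One subtlety deserves more care than you give it: what a ``sentence of $\Delta_n$'' is as a syntactic object. The paper defines $\Delta_n$ semantically, so a $\Delta_n$-sentence does not come equipped with a computable $\Pi_n$-representative; your upper bound tacitly assumes the input is presented as an adequate pair consisting of a $\Sigma_n$- and a $\Pi_n$-form. Under that convention the upper bound is fine, but then your hardness witnesses $\phi_k$ are produced only in $\Pi_n$-form; each lies in $\Delta_n$ because it is equivalent to $\top$ or to $\bot$, yet you cannot uniformly exhibit a $\Sigma_n$-equivalent without already deciding $\theta(k)$. So either the decision problem must be read as a promise problem (validity restricted to sentences that happen to lie in $\Delta_n$, presented in $\Pi_n$-form), or one should arrange the reduction to land in an honest syntactic fragment known to sit inside $\Delta_n$ (for $n=2$, plain second order logic already suffices, since its validity problem is $\Pi_2$-complete). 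This is a matter of fixing conventions rather than a mathematical error, but as written the two halves of your proof use slightly different readings of ``the decision problem of $\Delta_n$'', and that should be reconciled.
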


\section{Sort logic and foundations of mathematics}

We suggest that sort logic can provide a foundation of mathematics in the same way as second order logic, with the strong improvement that it does not depend on the ad hoc Large Domain Assumptions of set theory.

We will now think of foundations of mathematics from the point of view of sort logic. Propositions of mathematics are---according to the sort logic view---either of the form $$\ma\models\phi,$$ where $\ma$ is a structure, characterizable in sort logic, and $\phi$ is a sentence of sort logic, or else of the form $$\models\phi,$$ where again $\phi$ is a sentence of sort logic. Thus a proposition of mathematics either states a {\em specific} truth, truth in a specific structure, or a {\em general} truth, truth in all structures. The specific truth can be reduced to the general truth as follows. Suppose $\theta_\ma$ is a sort logic sentence which characterizes $\ma$ up to isomorphism. Then $$\ma\models\phi\iff\models\theta_\ma\to\phi.$$

Curiously, and quite unlike the case of second order logic, the converse holds, too. Suppose $\phi$ is a sort logic sentence in which the predicates $P_1,...,P_k$ occur only. Let $X_1\dot X_k$ be new unary predicate variables  of sorts $\srt(P_1),...,\srt(P_k)$ respectively.  Then $$\models\phi\iff\ma\models\Forall  X_1\ldots\Forall X_k\phi(X_1\ldots X_k/P_1\ldots P_k)$$
$$\not\models\phi\iff\ma\models\neg\Forall  X_1\ldots\Forall X_k\phi(X_1\ldots X_k/P_1\ldots P_k).$$  Intuitively this says that a sort logic sentence which talks about the predicates $P_1\dot P_k$ in some domains is valid if and only if whatever new domains and interpretations we take for $P_1\dot P_k$, $\phi$ is true. Since the general truth is reducible to specific truth we may focus on specific truth only, without loss of generality.

What is the justification we can give to asserting $\ma\models\phi$? We can {\em prove} from the axioms of sort logic the sentence $\theta_\ma\to\phi$. Of course, we may have to go beyond the standard axioms of sort logic, but much of mathematics can be justified with the sort logic axioms that we have.

What is the justification for asserting that $\phi$ has a model, i.e. $\neg\phi$ is not valid? By the above it suffices to prove from the axioms the sentence $$\neg\Forall  X_1\ldots\Forall X_k\phi(X_1\ldots X_k/P_1\ldots P_k).$$ If we compare the situation of sort logic with that of second order logic the difference is that in second order logic we have to make so-called ``large domain assumptions" to justify existence of mathematical structures, while in sort logic we can simply prove them from the general Comprehension Axioms. But here comes a moment of truth. Can we actually prove the existence of the structures necessary in mathematics from the mere Comprehension Axioms?

In fact we need more axioms to supplement the First and the Second Comprehension Axiom.

\begin{definition}
{\bf Power Sort Axiom:} 
\begin{equation}\label{psa}
\begin{array}{l}
\Exists Y(\forall u\exists z_1(u=z_1)\wedge\forall z_1\exists u(u=z_1)\wedge\\
 \forall x\forall y
(\forall z_1\ldots\forall z_n(Yxz_1\ldots z_n\leftrightarrow Yyz_1\ldots z_n)\to x=y)\wedge\\
\forall X\exists x\forall z_1\ldots \forall z_n(Xz_1\ldots z_n\leftrightarrow Yxz_1\ldots z_n)),
\end{array}
\end{equation}
where $$\begin{array}{l}
\srt(X)=(\srt(z_1),\ldots,\srt(z_n))\\
\srt(Y)=(\srt(x),\srt(z_1),\ldots,\srt(z_n))\\
\srt(x)=\srt(y)\\
\srt(z_1)=\ldots=\srt(z_n).
\end{array}$$
\end{definition}

 Note that in the Power Sort Axiom only $\srt(u)$ occurs free, so it is an axiom about models with one sort, namely $\srt(u)$. Naturally the models may consist of other sorts as well, this axiom just does not say anything about those sorts. The sort $\srt(z_1)$ is just an auxiliary copy of the sort $\srt(u)$, as the conjunct $\forall u\exists z_1(u=z_1)\wedge\forall z_1\exists u(u=z_1)$ stipulates. The sort $\srt(x)$ is a new sort which codes the $n$-ary relations on the domain $\srt(u)$. The coding is done by means of the predicate $Y$.

\begin{lemma}
Every full model satisfies the Power Sort Axiom.
\end{lemma}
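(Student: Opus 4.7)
The plan is to produce, for an arbitrary full $L$-structure $\mm$, a single explicit witness for the outermost $\Exists Y$. Since the only free sort of the Power Sort Axiom is $\srt(u)$, set $M:=M_{\srt(u)}$. I would form a $Y$-expansion $\mm'$ by choosing (without loss of generality, since the $\Exists$-quantifier permits relabeling of the bound sort names) the new sorts $\srt(x),\srt(z_1)\dot\srt(z_n)$ to lie outside $\dom(\mm)$, and then putting
\[
  M'_{\srt(z_i)}:=M \quad (i=1\dot n),\qquad M'_{\srt(x)}:=\P(M^n),
\]
and interpreting $Y$ as the natural ``membership'' relation
\[
  Y^{\mm'}:=\{(R,a_1\dot a_n)\in\P(M^n)\times M^n : (a_1\dot a_n)\in R\}.
\]

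I would then verify the four conjuncts in the body of the axiom directly against $\mm'$. The bijection clauses $\forall u\exists z_1(u=z_1)$ and $\forall z_1\exists u(u=z_1)$ are immediate, because as bare sets $M'_{\srt(z_1)}$ equals $M$, so every element of one literally belongs to the other. The extensionality clause holds since distinct $R,R'\in\P(M^n)$ are distinct as sets, hence differ on some tuple $(a_1\dot a_n)\in M^n$, on which $Y^{\mm'}$ separates them. Finally, the surjectivity clause
\[
  \forall X\exists x\forall z_1\dots\forall z_n\bigl(Xz_1\dots z_n\leftrightarrow Yxz_1\dots z_n\bigr)
\]
is where fullness is actually used: in a full model the relation variable $X$ of sort $(\srt(z_1)\dot\srt(z_n))$ ranges over \emph{all} subsets of $(M'_{\srt(z_1)})^n=M^n$, and each such subset is literally already an element of $M'_{\srt(x)}=\P(M^n)$, so sending $X$ to itself (viewed now as an element of the new sort) supplies the required $x$.

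The only real delicacy is sort-bookkeeping rather than mathematics: one must check that the syntactic New Sort Condition on $\Exists Y$ is respected (it is, since the only free symbol in the body is $Y$ itself), and that the chosen new sorts $\srt(x),\srt(z_1)\dot\srt(z_n)$ are disjoint from $\dom(\mm)$ so that the $Y$-expansion is well-defined in the sense of Definition \ref{stru defn}. Beyond that, the argument reduces to the single observation that in a full model the second-order quantifier on $X$ runs over the full power set $\P(M^n)$, which is exactly what allows $\P(M^n)$ itself, added as a new sort, to serve as a universal code for $n$-ary relations on $M$.
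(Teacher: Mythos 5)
Your proof is correct and follows essentially the same route as the paper's: expand the model by new sorts interpreting $\srt(z_i)$ as a copy of $M_{\srt(u)}$ and $\srt(x)$ as $\P(M^n)$, take $Y$ to be the membership relation, and use fullness so that $\forall X$ ranges over all of $\P(M^n)$, each member of which is its own code. The only cosmetic difference is that you write the witness as an interpretation $Y^{\mm'}$ while the paper (correctly, since $Y$ is a bound relation variable) supplies it via a modified assignment $s'(Y)$; the substance is identical.
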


\begin{proof}
Suppose $\mm$ is a (full) model and $s$ is an assignment into $\mm$. Let us fix $X$, $Y$, and $x,y,z_1,\ldots, z_n$ such that $$\srt(X)=(\srt(z_1),\ldots,\srt(z_n)), \srt(Y)=(\srt(x),\srt(z_1),\ldots,\srt(z_n)),$$ $\srt(x)=\srt(y)$ and $\srt(z_1)=\ldots=\srt(z_n)$. Let $\mn$ be like $\mm$ except that there is a new sort $\srt(u)$ (or if this sort existed in $\mm$ it is now replaced) with universe $\P(M_{\srt(x)})$ and $\mn_{\srt(z_1)}=M_{\srt(u)}$. Let $s'$ be like $s$ except that $$s'(Y)=\{(a,b_1,\ldots,b_n)\in N_{\srt(x)}\times M_{\srt(u)}
\times\ldots\times M_{\srt(u)}
 : (b_1,\ldots,b_n)\in a\}.$$ Now $s'$ satisfies in $\mn$ the formula
$$
\begin{array}{l}
 \forall x\forall y
(\forall z_1\ldots\forall z_n(Yxz_1\ldots z_n\leftrightarrow Yyz_1\ldots z_n))\to x=y)\wedge\\
\forall X\exists x\forall z_1\ldots \forall z_n(Xz_1\ldots z_n\leftrightarrow Yxz_1\ldots z_n)),
\end{array}$$ Hence $s$ satisfies in $\mm$ the formula~(\ref{psa}).

\end{proof}

\begin{definition}
{\bf Infinite Sort Axiom:}
\begin{equation}\label{isa}
\begin{array}{l}
\Exists X(\forall x\forall y\forall z
((Xxy\wedge Xxz)\to y=z)\\
\forall x\forall y\forall z((Xxz\wedge Xyz)\to x=y)\\
\forall x\exists yXxy\\
\exists z\forall x\forall y(Xxy\to \neg y=z))
\end{array}
\end{equation}
where $\srt(X)=(\srt(x),\srt(x))$ and $\srt(x)=\srt(y)=\srt(z)$.
\end{definition}

\begin{lemma}
Every full model satisfies the Infinite Sort Axiom.
\end{lemma}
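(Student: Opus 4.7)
The plan is to imitate the proof of the preceding Power Sort Axiom lemma: given a full model $\mm$ and any assignment $s$, I will build an $X$-expansion $\mm'$ of $\mm$ together with an explicit interpretation $A$ of the relation variable $X$ that witnesses the body of the axiom. Because the only sort newly quantified is $\srt(x)$, and the formula under $\Exists X$ has no other free variables or predicate symbols from the vocabulary, the interpretations assigned by $\mm$ and $s$ on the original sorts are irrelevant, and the task reduces to exhibiting one concrete structure on the new sort $\srt(x)$ in which $X$ behaves as required.

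For the expansion, I would take $\mm'$ to agree with $\mm$ except that $M'_{\srt(x)} = \omega$ (overriding the old interpretation of $\srt(x)$ if there was one, exactly as was done in the Power Sort Axiom proof). For the interpretation of $X$, I would set
$$A = \{(n, n+1) : n \in \omega\} \subseteq M'_{\srt(x)} \times M'_{\srt(x)},$$
the graph of the successor function on $\omega$.

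The remaining work is to verify that the four conjuncts inside the $\Exists X$ hold in $\mm'$ under $s[A/X]$. Functionality and injectivity are the standard facts that successor is a function and is one-to-one on $\omega$; seriality holds because every $n \in \omega$ has a successor $n+1$; and non-surjectivity is witnessed by interpreting the existentially quantified $z$ as $0$, since $n+1 \neq 0$ for every $n \in \omega$. Each check is routine once $A$ and $\mm'$ are fixed.

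I do not expect a genuine obstacle here. The substantive ingredient is the Axiom of Infinity in the ambient set-theoretic metatheory, which provides the Dedekind-infinite set $\omega$ together with its non-surjective injection; the lemma is essentially a syntactic repackaging of that fact into the shape demanded by the $\Exists$-quantifier. The only bookkeeping step requiring care is to confirm the New Sort Condition on $\Exists X$: since $\srt(X) = (\srt(x), \srt(x))$ and the matrix of the axiom contains no free variables and no predicate symbols of $L$ of sort $\srt(x)$ or of any tuple-sort involving $\srt(x)$, the condition is satisfied, and the $X$-expansion is legitimate.
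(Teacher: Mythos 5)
Your proposal is correct and matches the paper's own proof essentially verbatim: the paper also expands $\mm$ by interpreting the sort $\srt(x)$ as $\omega$, assigns the successor graph $\{(n,n+1):n\in\omega\}$ to $X$, and checks the four conjuncts with $0$ witnessing non-surjectivity. Your additional remark about verifying the New Sort Condition is a small bookkeeping point the paper leaves implicit, but it does not change the argument.
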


\begin{proof}
Suppose $\mm$ is a (full) model and $s$ is an assignment into $\mm$. Let us fix $X$,  and $x,y,z$ such that $\srt(X)=(\srt(x),\srt(x))$, and $\srt(x)=\srt(y)=\srt(z)$. Let $\mm'$ be like $\mm$ except that there is a new sort $\srt(x)$ (or if this sort existed in $\mm$ it is now replaced) with universe $\omega$. Let $s'$ be like $s$ except that $$s'(X)=\{(n,n+1)
 : n\in\omega\}.$$ Now $s'$ satisfies in $\mm'$ the formula
$$
\begin{array}{l}
\forall x\forall y\forall z
((Xxy\wedge Xxz)\to y=z)\\
\forall x\forall y\forall z((Xxz\wedge Xyz)\to x=y)\\
\forall x\exists yXxy\\
\exists z\forall x\forall y(Xxy\to \neg y=z)
\end{array}.$$
 Hence $s$ satisfies in $\mm$ the formula~(\ref{isa}).

\end{proof}

The Power Sort Axiom, reminiscent of the Power Set Axiom of set theory, is necessary for arguing about the existence of new sorts of elements. The Infinite Sort Axiom is required for arguing about infinite domains, just as we need the Axiom of Infinity in set theory. Note that  the Power Sort Axiom or the Infinite Sort Axiom do not imply that we have only infinite or uncountable models. By the above lemmas  these axioms  are true in all models, even finite ones.

With the above two new axioms we can construct mathematical structures up to any cardinality $<\beth_\omega$, as if we were working in Zermelo's set theory. For bigger structures we have to make stronger assumptions, and they probably have great similarity with the Replacement Axiom of set theory. The point is that in second (and higher) order logic we have to make ad hoc Large Domain Assumptions as we go from structure to structure, while in sort logic we need only make general assumptions about domains, as axioms of set theory postulate general properties of sets. 

So what is the difference between sort logic and set theory?  Despite its proximity to set theory, sort logic is still a logic, like first order logic, second order logic, infinitary logic, etc. Sort logic treats mathematical structures up to isomorphism only, there is no preference of one construction of a structure over another, and this is in line with common mathematical thinking about structures. In its model theoretic formulation sort logic gives rise to interesting reflection principles via its L\"owenheim-Skolem-Tarski properties. Finally, sort logic provides a natural model theoretic forum for investigating complicated set theoretical properties
of models, without going into the nuts and bolts  of the constructions of specific structures. 

As the strongest logic sort logic is an ultimate yard-stick of definability in mathematics. Any property that is isomorphism invariant can be measured by sort logic. The canonical hierarchy $\Delta_n$ ($n<\omega$) inside sort logic climbs up the large cardinal hierarchy by reference to Hanf-, L\"owenheim- and Skolem-L\"owenhem-Tarski-numbers, reaching all the way to Vopenka's Principle. The  model classes that are $\Delta_2$ in the Levy-hierarchy are exactly the model classes definable in the $\Delta$-extension of second order logic. Sort logic provides a similar characterization of model classes that are $\Delta_n$ in the Levy-hierarchy for $n>2$. Is this too strong a logic to be useful? For logics as strong as sort logic the main use is in definability theory. But sort logic has also a natural axiomatization, complete with respect to a natural concept of a Henkin model, so we can also write inferences in sort logic. This is an alternative way of looking at mathematics to set theory, one in which definition rather than construction is the focus.







\end{document}